\documentclass[12pt, letterside]{article}
\usepackage{amssymb, amsfonts}
\usepackage{amsmath}
\usepackage{amscd}
\usepackage[all]{xy}
\usepackage{amsthm}
\usepackage{enumerate}

\setlength{\headheight}{0in}
\setlength{\textheight}{8.5in}
\setlength{\oddsidemargin}{0.55in}
\setlength{\textwidth}{5.87in}
\setlength{\headsep}{-.19in}

%\newcommand{\draftspaced}{\singlespaced} %for draft only 
 %for final version
\pagestyle{plain}

%\swapnumbers

\theoremstyle{plain}
\newtheorem{thm}[subsection]{Theorem}
\newtheorem{prop}[subsection]{Proposition}

\newtheorem{lem}[subsection]{Lemma}

\theoremstyle{definition}
\newtheorem{definition}[subsection]{Definition}

\newtheorem{rem}[subsection]{Remark}

\DeclareMathOperator{\Z}{\mathbb{Z}}

\DeclareMathOperator{\Q}{\mathbb{Q}}

\DeclareMathOperator{\Gal}{\text{Gal}}

\DeclareMathOperator{\OO}{\mathcal{O}}

\DeclareMathOperator{\DR}{\text{DR}}
\DeclareMathOperator{\Syn}{\mathcal{S}}
\DeclareMathOperator{\Tan}{\mathcal{T}}
\newcommand{\La}{{{\Lambda}}}

    \DeclareFontFamily{U}{wncy}{}
    \DeclareFontShape{U}{wncy}{m}{n}{<->wncyr10}{}
    \DeclareSymbolFont{mcy}{U}{wncy}{m}{n}
    \DeclareMathSymbol{\Sha}{\mathord}{mcy}{"58}

\numberwithin{equation}{section}

%\newcommand{\ca}{\mathcal}

%\newcommand{\ol}{\overline}

%\doublespaced
\title{Tamagawa number conjecture for a $p$-adic family of uniform $F$-crystals}
\author{Tung T. Nguyen}
\begin{document}
\maketitle
\begin{abstract} K. Kato has recently constructed certain syntomic complexes associated with a uniform $F$-crystal over a smooth project variety $X$ and related their cohomology groups to special values of the $L$-function attached to $F$. In this paper, we generalize his results to a $p$-adic family of $F$-crystals.
\end{abstract}
\section{Introduction and our main results}

One of the fascinating features of number theory is the deep connection between an object's arithmetic properties and the analytic properties of its L-function. Perhaps one of the most precise conjectures about this connection is the Tamagawa number conjecture discovered by S. Block and K. Kato in $\cite{[BK]}$. More precisely, for each motive $M$ over a global field $F$, S. Bloch and K. Kato defined some important arithmetic groups associated with $M$ and related them to special values of the $L$-function attached to $M$. While this conjecture is still wide open when $F$ is a number field, much is known when $F$ is a function field of one variable over a finite field $k$ of characteristics $p$. For example, when $M$ is the $\ell$-adic realization of a motive over $F$ and $\ell \neq p$, Bloch and Kato showed in $\cite{[BK]}$ that the Tamagawa number conjecture is a consequence of Grothendieck's formula (see proposition $5.21$ of $\cite{[BK]}$.) However, when $\ell = p$, their method does not work, and new ideas are needed.

Quite recently, K. Kato in $\cite{Fcrystals}$ treated the remaining cases of the Tamagawa number conjecture over function fields when $\ell=p$ using some previous work of A. Ogus on uniform $F$-crystals. More precisely, let $X$ be a smooth projective smooth variety over a finite field of characteristics $p$, and $(D, \Phi)$ be a uniform $F$-crystal over X. In $\cite{Fcrystals}$, Kato constructed some syntomic complexes associated with $F$ and expressed the special values of the $L$-function attached to $F$ in terms of the cohomology groups of these syntomic complexes (see \cite{Fcrystals}, proposition $5.4$). In this article, we generalize his results to a $p$-adic family of uniform $F$-crystals. More precisely, we have the following theorem (we refer the readers to the body of this article  for the definitions of the terms appeared here.) 

\begin{thm}
Let $X$ be a smooth projective variety over a finite field $k$ of characteristics $p$. Let $F=(D, \Phi)$ be a uniform $F$-crystal over $X$. Let $K_{\infty}$ be a $p$-adic extension of the function field $K$ of $X$ satisfying certain technical conditions mentioned in section $4$ and $X_n$ be a tower of covers of $X$ associated with this extension. Let $\Syn(D), \Tan(D)$ be the complexes associated with $F$ constructed in section 2. Under some technical assumptions, there exists an element $\mathcal{L}_D\in K_1(\La(G)_{S^*})$ such that 
$$\partial_G(\mathcal{L}_D)= [R\varprojlim_n R\Gamma(X_n, \Syn(D))] +[R\varprojlim_n R\Gamma(X_n, \Tan(D)))]$$
in $K_0(\frak M_H(G))$ where $\partial$ is the boundary map of the long exact sequence of $K$-groups (see $\cite{[CFKSV]}$, (24).)
\end{thm}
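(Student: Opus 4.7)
The overall strategy is to reduce the family statement to Kato's finite-level result from \cite{Fcrystals}, Proposition $5.4$, and then to glue the finite-level characteristic elements into a single element of $K_1(\La(G)_{S^*})$ via the non-commutative Iwasawa-theoretic machinery of \cite{[CFKSV]}.  First I would fix the tower $X_n \to X$ and, for each $n$, apply Kato's theorem to the pullback $F_n$ of the uniform $F$-crystal $(D,\Phi)$ to $X_n$.  This produces a characteristic element $\mathcal{L}_{D,n} \in K_1(\La(G_n))$, where $G_n = \Gal(K_n/K)$, whose image under the boundary map $\partial_{G_n}$ of the $K$-theory localization sequence equals the sum $[R\Gamma(X_n,\Syn(D))] + [R\Gamma(X_n,\Tan(D))]$ in $K_0$ of the torsion category.

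The second step is compatibility.  I would check that as $n$ varies, the elements $\mathcal{L}_{D,n}$ satisfy the norm-compatibility required by the description of $K_1(\La(G))$ as an inverse limit of the $K_1(\La(G_n))$, so that they assemble into an element $\mathcal{L}_D \in K_1(\La(G))$.  This compatibility should follow from the functoriality of Kato's construction of syntomic complexes under the trace maps along the covers $X_{n+1} \to X_n$, together with the projection formula for $F$-crystals.  To obtain an element of the localization $K_1(\La(G)_{S^*})$ one then verifies that $\mathcal{L}_D$ has trivial image in $K_1$ of the appropriate quotient, i.e.\ that its specializations vanish along the trivializing characters; again this reduces to the finite-level statement.

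The third step is the identification in $K_0(\f M_H(G))$.  I would use the commutative diagram relating $\partial_G$ to the family of $\partial_{G_n}$ via the natural maps $K_0(\f M_H(G)) \to K_0$ of finite-level torsion categories, and observe that the classes $[R\varprojlim_n R\Gamma(X_n, \Syn(D))]$ and $[R\varprojlim_n R\Gamma(X_n, \Tan(D))]$ are characterized by their images at each finite level.  By the finite-level identity this image matches $\partial_{G_n}(\mathcal{L}_{D,n})$, which is exactly the image of $\partial_G(\mathcal{L}_D)$.

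The main obstacle I anticipate is showing that the two inverse-limit complexes actually lie in $\f M_H(G)$, i.e.\ that their cohomology groups are $S^*$-torsion $\La(G)$-modules of finite type over $\La(H)$.  At each finite level $X_n$ Kato's syntomic and tangent complexes have cohomology that is finite, so after taking the limit one gets finitely generated $\La(H)$-modules; however, controlling the $S^*$-torsion property across the whole tower requires a uniform bound on the cohomology, and this is where the technical assumptions on $K_\infty$ mentioned in section $4$ must be used in an essential way.  A secondary technical point will be to handle the fact that the syntomic complex involves a Frobenius twist, so that the $\La(G)$-module structure on $R\varprojlim_n R\Gamma(X_n,\Syn(D))$ must be interpreted via the crystalline site rather than naively; I would address this by working throughout with the appropriate filtered $\vp$-module version of the Iwasawa algebra.
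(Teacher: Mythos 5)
Your overall strategy --- constructing $\mathcal{L}_D$ by gluing finite-level characteristic elements $\mathcal{L}_{D,n}\in K_1(\La(G_n))$ and passing to the limit --- is not the route the paper takes, and it has a genuine gap at the gluing step. Kato's finite-level result (Proposition \ref{L-function}) produces classes in the relative group $K_0(\Z_p,\Q_p)$ together with the value $L(D,1)\in K_1(\Q_p)$; it does not hand you a distinguished element of $K_1(\La(G_n))$, and there is no canonical norm-compatible system of such lifts. The obstruction to producing compatible lifts across the tower is essentially the content of the main conjecture itself, so your second step is close to circular. Moreover $K_1(\La(G)_{S^*})$ is not an inverse limit of $K_1$ of finite group rings: the Ore localization at $S^*$ is an intrinsically infinite-level object. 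The paper avoids all of this by defining $\mathcal{L}_D$ \emph{directly} at the infinite level, as minus the class of the automorphism $1-\phi\iota$ of $\La(G)_{S^*}\otimes_{\La(G)}R\varprojlim_n R\Gamma(X_n, N_{-\infty}DR(D))$, where $\iota$ is the inverse of the canonical comparison map $\mathbf{1}$ (invertible after localization). The identity $\partial_G(\mathcal{L}_D)=[R\varprojlim_n R\Gamma(X_n,\Syn(D))]+[R\varprojlim_n R\Gamma(X_n,\Tan(D))]$ then falls out of the two distinguished triangles relating $\Syn(D)$, $\Tan(D)$, $N_0DR(D)$ and $N_{-\infty}DR(D)$; the finite-level proposition is used only afterwards, to verify the interpolation property $\rho(\mathcal{L}_D)=L(X,D,\rho^{\vee},0)$.

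Your treatment of the $\frak M_H(G)$ membership also does not work as stated: an inverse limit of finite cohomology groups need not be finitely generated over $\La(H)$, so ``after taking the limit one gets finitely generated $\La(H)$-modules'' is not a valid inference. The paper's argument (Lemma \ref{MHGlem}) is a descent-plus-Nakayama argument: Theorem 2.11 of \cite{[TV]} gives $\Z_p\otimes^L_{\La(G')}R\varprojlim_n R\Gamma(X_n,N_0DR(D))=R\Gamma(X',N_0DR(D))$ for a pro-$p$ subgroup $G'=\Gal(K_{\infty}/K')$, the right-hand side is a finitely generated $\Z_p$-module, and Nakayama's lemma over the local ring $\La(G')$ yields finite generation over $\La(G)$; the analogous base change over $\La(H')$ handles the $\La(H)$-finiteness. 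Crucially, for the syntomic complex this requires the extra hypothesis that the $\mu$-invariant of $R\varprojlim_n R\Gamma(X'\otimes_k k_n,\Syn(D))$ over $\La(\Gamma')$ vanishes --- this is the main ``technical assumption'' of the theorem, and your proposal does not identify it. Finally, your worry about needing a ``filtered $\vp$-module version of the Iwasawa algebra'' is misplaced: the $\La(G)$-module structure is the naive one, and the Frobenius enters only through the map $1-\phi\iota$ on the localized de Rham complex.
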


\section{Syntomic complexes and Tamagawa number conjecture for $F$-crystals}
Throughout this article, we assume that $ X $ is a smooth projective variety over a finite field $k$ of characteristics $p$. Let $F=(D, \Phi)$ be uniform $F$-crystals on $X$.  In this section, we provide a brief overview of Kato's construction of certain syntomic complexes assocaited with a uniform $F$ crystals over $X$. We refer the readers to $\cite{Fcrystals}$ for further details. 

\begin{rem}
One of the main reasons that we restrict our study to uniform $F$-crystals is that uniform $F$-crystals form a good category. They also have good integral properties which will allow us to define integral syntomic complexes. 

Most ``geometric'' $F$-crystals are uniform. More precisely, suppose $\pi: X \to Y$ is a proper smooth morphism of smooth schemes then under mild assumptions, the higher direct image $R^{m} \pi_{*} (\OO_{X})_{\text{cris}}$ is a uniform $F$-crystal on $X$ (for more details, see $\cite{[Mazur1]}, \cite{[Mazur2]}, \cite{[Ogus2]}.$)
\end{rem}

For each $q \in \Z$, let $\omega_{X}^{q} :=\Omega_{X}^{q}$ the sheaf of differential $q$-forms on $X$. Let $D_{X}$ be the vector bundle together with an integrable connection $\nabla$ on $X$ associated with $D$: 
\[ \nabla: D_{X} \to D_{X} \otimes_{\OO_{X}} \omega_{X}^{1} .\] 

Let $X \subset Y$ be a closed immersion of $X$ into a $p$-adic formal scheme $Y$ over $W(k)$. Let $D_{X}(Y)$ the PD envelop of $X$ in $Y$ and $\OO_{D_{X}(Y)}$ be the structure sheaf of the formal scheme $D_{X}(Y)$. Let $\omega_{Y}^{q}:=\Omega_{Y}^{q}$ be the sheaf of differential forms on $Y$. Let $D_{Y}$ be the locally free $\OO_{D_{X}(Y)}$-module together with an integrable connection $\nabla$ associated with $D$ which is defined by
\[ \nabla: D_{Y} \to D_{Y} \otimes_{\OO_{Y}} \omega_{Y}^{1} .\] 
For an $F$-crystal $(D, \Phi)$ we will use the same notation $\Phi$ for the induced isomorphism:
\[ \Phi: \Q_{p} \otimes_{\Z_p} F^{*} D_{Y} \to \Q_p \otimes_{\Z_p} D_{Y} .\] 
We remark that $\Phi$ is compatible with the integrable connection $\nabla$ endowed with $D_{Y}$.

To define syntomic complexes, we first need to introduce several filtration on $D_{Y}$ and $F^{*} D_{Y}$. 
\begin{definition} \label{fil}
For each $r \in \Z$, we define 
\[ N_{r}(D)_{Y} := D_{Y} \cap p^{-r} \Phi(F^{*}D_{Y}), \]
 \[ N^{r}(D)_{Y} :=D^{Y} \cap p^{r} \Phi(F^{*}D_{Y}), 
\]
and 
\[ M^{r}(D)_{Y} := \{x \in F^{*}D_Y| p^{-r} \Phi(x) \in D_{Y} \} \subset F^{*}D_Y .\] 
\end{definition}
First, we explain the construction of some de Rham complexes using the filtration on $D_{Y}$ and $F^{*} D_{Y}$ constructed  above. Following Deligne and Kato, we make the following convention. Let $C$ be a filtered complex. We define $\overline{C}$ to be the following complex  
\[ \overline{C}^{q}=\{x \in (^q C)^q| dx \in (^{q+1}C)^q \} .\] 
A key property of this construction is that if if $f: C \to C'$ is a homomorphism of filtered complexes such that for all $q$ $f: ^q C \to ^q C'$ is a quasi-isomorphism then the induced map $\overline{C} \to \overline{C'}$ is a quasi-isomorphism.

Recall that we have the following de Rham complex associated with $D$:
\[ \DR(D)_{Y}=[ D_{Y} \xrightarrow{\nabla} D_{Y}\otimes_{\OO_{Y}} \omega_{Y}^{1} \xrightarrow{\nabla} D_{Y} \otimes_{\OO_Y} \omega_{Y}^2 \xrightarrow{\nabla} \ldots ]\]
Using the filtration on $D_{Y}$ and $F^{*} D_{Y}$ defined in $\ref{fil}$, we define the following complexes. 
\[ N_{r} \DR(D)_{Y}=\overline{C}, \text{where} \quad ^q C=\DR(N_{r-q}(D))_{Y},\]
\[ N^{r} \DR(D)_{Y}=N_{-r} \DR(D)_{Y}, \]
and 
\[ M^r \DR(D)_{Y}=\overline{C}, \text{where} \quad ^q C =p^{q} \DR(M^{r-q}(D))_{Y} .\]
We remark that
\[  N_{(-\infty)} \DR(D)_{Y}=N^{(\infty)} \DR(D)_Y=p^{-r} N^r \DR(D)_{Y} ,\]
is independent of $r \gg 0.$

We define the syntomic complex $\Syn(D)_{Y}$ as the mapping fiber of the map \[ 1-\Phi \eta: N_0 \DR(D)_{Y} \to N_{(-\infty)} \DR(D)_{Y} .\] 
Here  $1$ is the inclusion map $N_0 \DR(D)_{Y} \to N_{(-\infty)} \DR(D)_{Y}$ and $\Phi \eta$ is the map defined as follow. First, $\eta: D_{Y} \to F^{*} D_{Y}$ is the map sending $x \in D_{Y}$ to $\eta(x)=1 \otimes x \in F^{*}_{D_{X}(Y)}D_{Y}=F^{*}D_{Y}.$ We then define $\Phi \eta$ as the induced map from $\Q_p \otimes_{\Z_p} \DR(D)_{Y} \to \Q_p \otimes_{\Z_p} \DR(D)_{Y}$ whose degree $q$-part is given by 
\[ x \otimes \omega \mapsto \Phi(\eta(x)) \otimes F_{Y}(w) .\] 
The constructions of the complexes $N_0 \DR(D)_{Y}, N_{(-\infty)} \DR(D)_{Y}, \Syn(D)_{Y}$,  are local in nature. However, we can glue these local constructions to get global complexes $\Syn(D)$ in the derived category of sheaves of abelian groups on the etale site of $X$. In other words, we have the following distinguished triangle. 
\[ \Syn(D) \to N_0 \DR(D) \to N_{(-\infty)} \DR(D) \to \Syn(D)[1] .\]
We also define $\Tan(D)$ as the complex obtained by gluing the following local complexes
\[ \Tan(D)_{Y}=N_{(-\infty)} \DR(D)_{Y}/N_0 \DR(D)_{Y} .\] 
As above, we have the following distinguished triangle. 
\[ N_0 \DR(D) \to N_{(-\infty)} \DR(D) \to \Tan(D) .\]
With these preparations, we  are now ready to state the main theorem of $\cite{Fcrystals}$.
\begin{prop} (\cite{Fcrystals}, proposition $5.4$) \label{L-function}
Suppose that $1-\Phi: H^m(X, \DR(D)_{\Q_p} \to H^m(X, \DR(D)_{\Q_p})$ are isomorphisms for all $m$. Then, the complexes $\Syn(D)$ and $\Tan(D)$ over the etale site of $X$ have finite cohomology groups. Let  $[R \Gamma(X, \Syn(D))]$ and $[R \Gamma(X,\Tan(D))]$ be the corresponding classes in $K_0(\Z_p, \Q_p)$. Then, we have
\[ \partial{L(D,1)}=[R \Gamma(X, \Syn(D))]+[R \Gamma(X,\Tan(D))] .\] 
Here $\partial$ is the boundary map of the long exact sequence of $K$-groups (see $\cite{[CFKSV]}$, (24).) 
\end{prop}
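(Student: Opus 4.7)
The plan is to derive the formula by reducing to Grothendieck's trace formula for $L$-functions over function fields, and then comparing the resulting determinant of $1 - \Phi\eta$ on de Rham hypercohomology with the $K_0$-classes extracted from the two distinguished triangles defining $\Syn(D)$ and $\Tan(D)$. First I would invoke Grothendieck's formula
\[
L(D, s) = \prod_m \det\bigl(1 - q^{-s}\Phi \mid H^m_{\crys}(X, D)_{\Q_p}\bigr)^{(-1)^{m+1}},
\]
and identify crystalline cohomology with the de Rham hypercohomology of $\DR(D)_{Y}$ for a smooth $p$-adic lift $Y$ of $X$ (working locally over a cover of $X$ and gluing). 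Specializing at $s = 1$ converts the twisted Frobenius $q^{-1}\Phi$ into the endomorphism $\Phi\eta$ acting on $H^m(X, \DR(D)_{\Q_p})$, so the standing isomorphism hypothesis guarantees $L(D, 1) = \prod_m \det(1 - \Phi\eta \mid H^m(X, \DR(D)_{\Q_p}))^{(-1)^{m+1}} \in \Q_p^{\times}$, whose image under $\partial\colon K_1(\Q_p) \to K_0(\Z_p, \Q_p)$ is the quantity we must match.

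Next I would feed the two distinguished triangles
\[
\Syn(D) \to N_0 \DR(D) \xrightarrow{1 - \Phi\eta} N_{(-\infty)} \DR(D), \qquad N_0 \DR(D) \to N_{(-\infty)} \DR(D) \to \Tan(D)
\]
into $R\Gamma(X, -)$. Additivity of Euler characteristics in $K_0(\Z_p, \Q_p)$ together with the fact that after inverting $p$ the two lattices $R\Gamma(X, N_0 \DR(D))$ and $R\Gamma(X, N_{(-\infty)} \DR(D))$ agree inside $R\Gamma(X, \DR(D))_{\Q_p}$ yields
\[
[R\Gamma(X, \Syn(D))] + [R\Gamma(X, \Tan(D))] = \partial\bigl(\det(1 - \Phi\eta \mid R\Gamma(X, \DR(D))_{\Q_p})\bigr),
\]
where the $\Tan(D)$ contribution records the lattice discrepancy while the $\Syn(D)$ contribution records the determinant of the self-endomorphism $1 - \Phi\eta$ on the common rational complex. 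Comparing with the display in the previous paragraph gives the sought identity.

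The main obstacle I expect is the bookkeeping for the filtrations $N_r$, $N^r$, $M^r$ and for the Deligne--Kato construction $\overline{C}$ of a subcomplex of a filtered complex. Concretely, one must match the Frobenius twists $F_Y(\omega)$ and the powers of $p$ appearing in $^qC = p^q \DR(M^{r-q}(D))_Y$ against the lattices $N_{r-q}(D)_Y$ so that the alternating-sum contributions across de Rham degrees and filtration levels collapse to $\det(1 - \Phi\eta)$ on $\DR(D)_{\Q_p}$ up to the correction absorbed by $\Tan(D)$. Here the hypothesis of \emph{uniformity} on the $F$-crystal is essential: it guarantees that each $N_r(D)$, $N^r(D)$, $M^r(D)$ is locally free so that $\overline{C}$ behaves well under $R\Gamma(X,-)$ and so that the integral comparisons above actually make sense. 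Finiteness of the cohomology groups is a separate, more elementary step: since $1 - \Phi\eta$ is rationally invertible by hypothesis, the mapping fiber $\Syn(D)$ and the cokernel $\Tan(D)$ have torsion hypercohomology, and finite generation over $\Z_p$ follows from properness of $X$.
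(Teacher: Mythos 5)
A preliminary but important point: the paper does not prove this proposition at all --- it is quoted verbatim from \cite{Fcrystals}, Proposition 5.4, and used as a black box --- so there is no in-paper argument to measure your proposal against. Judged on its own terms, your outline does follow what is surely the intended two-step strategy: (a) express the $L$-value cohomologically as an alternating product of determinants on $H^m(X,\DR(D)_{\Q_p})$, and (b) use the two distinguished triangles together with additivity in $K_0(\Z_p,\Q_p)$ to identify $[R\Gamma(X,\Syn(D))]+[R\Gamma(X,\Tan(D))]$ with $\partial$ of the determinant of $1-\Phi\eta$ on the rational complex, the $\Tan(D)$ term absorbing the discrepancy between the lattices $R\Gamma(X,N_0\DR(D))$ and $R\Gamma(X,N_{(-\infty)}\DR(D))$, which become equal after inverting $p$. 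Step (b) and the finiteness assertions are essentially correct as you state them (though note $\Tan(D)$ is torsion simply because the two lattices are commensurable, independently of the hypothesis on $1-\Phi$).

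The genuine gap is in step (a), which you treat as an application of ``Grothendieck's formula'' plus bookkeeping. For $\ell=p$ there is no Grothendieck formula to invoke: the identity $L(D,s)=\prod_m\det\bigl(1-q^{-s}F\mid H^m_{\crys}(X/W,D)_{\Q_p}\bigr)^{(-1)^{m+1}}$ is the crystalline/rigid Lefschetz trace formula (\'{E}tesse--Le Stum), a nontrivial theorem whose applicability to the isocrystal attached to a uniform $F$-crystal must be justified --- indeed, the introduction of the present paper stresses that the Bloch--Kato argument via Grothendieck's formula works only for $\ell\neq p$. Moreover, ``specializing at $s=1$ converts $q^{-s}\Phi$ into $\Phi\eta$'' is not a tautology: the operator $\Phi\eta$ on $H^m(X,\DR(D)_{\Q_p})$ \emph{is} the crystalline Frobenius (the twist $F_Y^*$ on $\omega_Y^q$ is part of the definition of Frobenius on hypercohomology, and is divisible by $p^q$ in degree $q$), so matching the evaluation point $s=1$ rather than $s=0$ against the specific operator $1-\Phi\eta$ and the specific lattice $N_0\DR(D)$ is a genuine normalization problem involving Tate twists and the filtrations $N_r$, $M^r$. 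That normalization is precisely where uniformity earns its keep, and your sketch defers it rather than carrying it out.
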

\section{Tamagawa number conjecture for a $p$-adic family of $F$-crystals, Preparations}
In this section, we generalize proposition $\ref{L-function}$ to the arithmetic $\Z_p$-extension of $X$. More precisely, by Galois theory, there exists a $p$-adic Galois extension $k_{\infty}/k$ such that $\Gal(k_{\infty}/k)=\Gamma=\Z_p$. Let us denote by $\Gamma_n$ the abelian group $\Z/p^n \Z$ and by $k_n$ the unique subfield of $k_{\infty}$ such that $\Gal(k_n/k)=\Gamma_n$. We call $k_n$ the $n$-th layer of $k_{\infty}/k$. Let $X_n$ be the variety $X \otimes k_n$. We define the following total complexes \[ I_n= R \Gamma(X_n, N_{0} DR(D)),\]
\[ P_n =R \Gamma(X_n, N_{-\infty} DR(D)),\]
\[ N_n=R \Gamma(X_n, \Syn(D)), \]
\[ L_n = R \Gamma(X_n, \Tan(D)) .\] 

We also define $N_{\infty}$ be the inverse limits of $(N_n)$ 
\[ N_{\infty}= R \varprojlim_{n} N_n. \] 
For each $S \in \{L,I,P \}$, we define $S_{\infty}$ similarly.  

For $S \in \{N, L, I,P \}$, $S_n$ is a natural module over $\Z_p[\Gamma_n]$. Consequently, $S_{\infty}$ is a module over the Iwasawa algebra $\Lambda(\Gamma)$ where 
\[ \Lambda(\Gamma)=\varprojlim_{n} \Z_p[\Gamma_n] .\] 
We denote by $\underline{N}=(N_n)$ the object in the derived category $D^{b}(_{\underline{\Gamma}}(\Z_p-mod))$ of normic systems along the profinite group $\Gamma$ (see section 2 of $\cite{[TV]}$ for the precise definition of normic systems.) For each $S \in \{L,I,P \}$, we define $\underline{S}$ similarly.

By construction, we have the following proposition (see proposition $5.1$ of $\cite{[TV]}$ for a similar statement.) 
\begin{prop} \label{normic}
Let $\underline{W}=(W(k_n)) \in _{\underline{\Gamma}}(\Z_p-mod)$ be the natural normic system of $\Z_p$-modules along $\Gamma$. For $X \in \{I, P, L \}$, there is a canonical isomorphism in $D^{b}(_{\underline{\Gamma}}(\Z_p-mod))$:
\[ \underline{W} \bigotimes^{L} X_{0} \cong \underline{X}. \]
\end{prop}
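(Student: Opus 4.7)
The plan is to reduce the statement to a flat base change computation, exploiting the fact that $k_{\infty}/k$ is the unramified $\Z_p$-extension of the finite field $k$. Each layer $k_n/k$ is Galois \'etale of degree $p^n$, so the Witt ring extension $W(k_n)/W(k)$ is finite \'etale and free of rank $p^n$ as a $W(k)$-module. Consequently $W(k_n)$ is flat over $W(k)$, so the derived tensor $\underline{W} \otimes^{L} X_{0}$ is computed level-wise without higher Tor terms, and the claim will reduce to producing compatible quasi-isomorphisms $W(k_n) \otimes_{W(k)} X_{0} \xrightarrow{\sim} X_{n}$.

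First I would verify at the sheaf level that the ingredients used to construct $N_{0}\DR(D)$, $N_{(-\infty)}\DR(D)$ and $\Tan(D)$ commute with base change along the finite \'etale map $\pi_{n}\colon X_{n} \to X$. Each of the objects $D_{Y}$, $\nabla$, $\Phi$, and the filtrations $N_{r}(D)_{Y}$, $N^{r}(D)_{Y}$, $M^{r}(D)_{Y}$ of Definition \ref{fil} is built $\OO$-linearly from $D$ and $\Phi$, so the pullback of each of the three complexes along $\pi_{n}$ agrees with the analogous complex computed intrinsically on $X_{n}$ from the pulled-back $F$-crystal. Combined with the projection formula and the identification $\pi_{n,*}\pi_{n}^{*}\mathcal{F} \cong \mathcal{F} \otimes_{W(k)} W(k_{n})$ (valid because $\pi_{n}$ is finite \'etale and Galois with group $\Gamma_{n}$), flat base change then yields a canonical quasi-isomorphism
$$W(k_{n}) \otimes_{W(k)}^{L} R\Gamma(X, \mathcal{F}) \xrightarrow{\sim} R\Gamma(X_{n}, \mathcal{F})$$
for each $\mathcal{F} \in \{N_{0}\DR(D),\, N_{(-\infty)}\DR(D),\, \Tan(D)\}$, producing the three level-wise isomorphisms claimed. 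Note that the syntomic complex $\Syn(D)$, which involves the operator $1-\Phi\eta$ and therefore interacts nontrivially with the Witt-vector Frobenius on $W(k_n)$, is deliberately excluded from the list $\{I,P,L\}$ in the statement, and this exclusion is what makes the above purely formal.

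The remaining work, and the main obstacle, is promoting these level-wise quasi-isomorphisms to a morphism of normic systems in $D^{b}({}_{\underline{\Gamma}}(\Z_{p}\text{-mod}))$, i.e., verifying compatibility with the transition maps on both sides. On the right the transitions are induced by the natural morphisms in the tower $(X_{n})$ (pullback or trace, according to the convention of $\cite{[TV]}$); on the left they are induced by the corresponding maps $W(k_{m}) \to W(k_{n})$ (respectively their traces) tensored with the identity on $X_{0}$. Both arise from the factorization $X_{n} \to X_{m} \to X$ and from the functoriality of the flat base change morphism applied to this factorization, so the compatibility should reduce to a two-square diagram chase. I would model this verification on the proof of proposition $5.1$ of $\cite{[TV]}$, which establishes a structurally identical statement in a closely related setting; the only input beyond that reference is the observation in the previous paragraph that the three complexes in question are stable under \'etale base change.
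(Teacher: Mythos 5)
Your argument is correct and supplies exactly the content the paper leaves implicit: the paper gives no proof of Proposition \ref{normic} beyond the phrase ``by construction'' and a pointer to Proposition 5.1 of \cite{[TV]}. Your reduction to flat base change along the constant-field extension $X_n = X\otimes_k k_n$, the key observation that the filtered de Rham complexes $N_0\DR(D)$, $N_{(-\infty)}\DR(D)$, $\Tan(D)$ are stable under this base change while $\Syn(D)$ is not (since $1-\Phi\eta$ is only Frobenius-semilinear over $W(k_n)$, which is precisely why the paper remarks the statement fails for $\underline{N}$), and the deferral of the normic-system compatibilities to the argument of \cite{[TV]} Proposition 5.1 together match the intended proof.
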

\begin{rem}
Note that this is note true for $\underline{N}$. 
\end{rem}

We have the following result, which is a natural generalization of proposition $\ref{L-function}$ to a family of $F$-crystals. 

\begin{prop} \label{first_theorem}
$N_{\infty}$ and $L_{\infty}$ are torsion over $\Lambda(\Gamma)$. Furthermore, there exists an element $\mathcal{L}_{D} \in K_1(\Lambda(\Gamma))$ such that 
\[ \partial(\mathcal{L}_{D}) = [N_{\infty}]+[L_{\infty}], \] 
where $[N_{\infty}]$ and $[L_{\infty}]$ are the corresponding classes of $N_{\infty}$ and $L_{\infty}$ in $K_0(\Lambda(\Gamma))$ and $\partial$ is the boundary map defined in $\cite{[CFKSV]}$, (24). 

\end{prop}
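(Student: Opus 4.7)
The plan is to apply proposition~\ref{L-function} in conjunction with the normic structure result proposition~\ref{normic} and pass to the inverse limit. The two torsion statements are genuinely separate problems, since proposition~\ref{normic} covers $\underline{I}, \underline{P}, \underline{L}$ but explicitly not $\underline{N}$; for $N_\infty$ I would instead propagate the result through the distinguished triangle \[\Syn(D) \to N_0 \DR(D) \to N_{(-\infty)}\DR(D) \to \Syn(D)[1].\]

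First I would show that $L_\infty$ is a perfect torsion $\La(\Gamma)$-complex. Applying $R\varprojlim_n$ to the isomorphism $\underline{W} \otimes^{L}_{\Z_p} L_0 \cong \underline{L}$ of proposition~\ref{normic} yields \[ L_\infty \cong \La(\Gamma) \otimes^{L}_{\Z_p} L_0,\] once one checks that the Mittag--Leffler condition holds for the pro-system, which follows from $L_0$ being a perfect $\Z_p$-complex with finite cohomology (proposition~\ref{L-function}). In particular $L_\infty$ is perfect over $\La(\Gamma)$ by base change, and since $L_0 \otimes_{\Z_p} \Q_p$ is acyclic, so is $L_\infty \otimes_{\Z_p} \Q_p$; hence $L_\infty$ is $\La(\Gamma)$-torsion. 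An identical argument applied to $\underline{I}$ and $\underline{P}$ shows that $I_\infty$ and $P_\infty$ are perfect $\La(\Gamma)$-complexes (although not torsion).

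For $N_\infty$, I would apply $R\varprojlim_n R\Gamma(X_n,-)$ to the defining distinguished triangle of $\Syn(D)$ to obtain \[ N_\infty \to I_\infty \to P_\infty \to N_\infty[1].\] Perfectness of $N_\infty$ over $\La(\Gamma)$ follows from perfectness of $I_\infty$ and $P_\infty$ in the previous step. After inverting $p$, the middle map computes on $m$-th cohomology as the operator $1-\Phi\eta$ acting on $H^m(X_n, \DR(D)_{\Q_p})$; by the standing hypothesis of proposition~\ref{L-function} (which we inherit as a hypothesis here) this is an isomorphism for every $m$ and every $n$. Therefore $N_\infty \otimes \Q_p$ is acyclic, so $N_\infty$ is $\La(\Gamma)$-torsion.

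Once both $[N_\infty]$ and $[L_\infty]$ are realized as classes of perfect torsion $\La(\Gamma)$-complexes, they lie in the image of the boundary map $\partial \colon K_1(\La(\Gamma)_{S^*}) \to K_0(\frak M_H(\Gamma))$ of the localization sequence \cite{[CFKSV]}, (24), and any preimage serves as $\mathcal{L}_D$. The main obstacle I expect is the first step: justifying rigorously that derived inverse limit commutes with the derived base change from $\Z_p$ to $\La(\Gamma)$, equivalently verifying the vanishing of the higher $R^i\varprojlim$ obstructions that would otherwise prevent the identification $L_\infty \simeq \La(\Gamma) \otimes^{L}_{\Z_p} L_0$. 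Once this finiteness input is secured, the remainder of the argument is a formal consequence of the distinguished triangle and the localization sequence in $K$-theory.
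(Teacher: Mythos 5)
Your treatment of $L_\infty$ broadly matches the paper's: both use Proposition~\ref{normic} together with $\Q_p \otimes_{\Z_p} L_0 = 0$ to see that $L_\infty$ is $p$-power torsion, hence $\La(\Gamma)$-torsion. Your treatment of $N_\infty$, however, diverges from the paper and contains a genuine gap.

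You argue that, after inverting $p$, the map $I_n \to P_n$ is an isomorphism level-by-level, so each $N_n \otimes_{\Z_p} \Q_p$ vanishes, and you then conclude ``therefore $N_\infty \otimes \Q_p$ is acyclic.'' This inference does not hold: an inverse limit of finite $p$-groups need not be $p$-power torsion (e.g.\ $\varprojlim_n \Z/p^n\Z = \Z_p$, and $\Q_p \otimes_{\Z_p} \Z_p \neq 0$). Since $H^i(N_\infty)$ is built from $\varprojlim H^i(N_n)$ and $R^1\varprojlim H^{i-1}(N_n)$, the finiteness of each $H^i(N_n)$ gives you compact $\Z_p$-modules for $H^i(N_\infty)$, not $p$-torsion ones. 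In fact what one wants to prove is precisely the weaker assertion that $Q(\Gamma)\otimes_{\La(\Gamma)} H^i(N_\infty) = 0$, not $\Q_p\otimes_{\Z_p} H^i(N_\infty)=0$. The paper handles this by a different mechanism: it combines the two distinguished triangles into a single self-map $1-\phi\iota$ on $W_\infty\otimes^L_{\Z_p}(\Q_p\otimes P_0)$, invokes Lemma~5.3 of~\cite{[TV]} for \emph{injectivity} of $1-\phi\iota$ on cohomology, and then uses that $Q(\Gamma)\otimes_{\La(\Gamma)} H^i(P_\infty)$ is a \emph{finite-dimensional} $Q(\Gamma)$-vector space to promote the injection to an isomorphism, forcing $Q(\Gamma)\otimes H^i(N_\infty)=0$. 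This two-step argument (injectivity from the cited lemma, then dimension count over the fraction field) is exactly what replaces your level-by-level vanishing claim; without it, the limiting statement is not established.

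A smaller issue: you appeal to the hypothesis of Proposition~\ref{L-function} ``for every $m$ and every $n$,'' but that hypothesis is formulated only for $X$, not the covers $X_n$; propagating it to every $X_n$ would require a separate argument and is not something the paper's proof of this proposition needs to do. Finally, the paper defers the existence of $\mathcal{L}_D$ to the next section where it is constructed explicitly as (minus) the class of the automorphism $1-\phi\iota$; your remark that ``any preimage serves'' is not wrong in spirit, but it elides the step of checking $[N_\infty]+[L_\infty]$ lies in the kernel of $K_0(\f M_H(\Gamma))\to K_0(\La(\Gamma))$, which requires using both distinguished triangles together.
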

We provide a proof for the first statement. The second statement will be proved in a more general setting discussed in the next section. Our proof is based on the ideas of the paper $\cite{[TV]}$ and we are thankful to its authors of for the innovative ideas. We refer the readers to section 2 and section 3 of that paper for some of the notions that we use here.

\begin{proof}
By the above construction, we have the following distinguished triangles in $D(\Lambda(\Gamma))$:

\[ \Q_p \otimes_{\Z_p} N_{\infty} \to \Q_p \otimes_{\Z_p} I_{\infty} \xrightarrow{1-\phi} \Q_p \otimes_{\Z_p} P_{\infty} \to N_{\infty}[1], \] 

and 
\[ \Q_p \otimes_{\Z_p} I_{\infty} \xrightarrow{\mathbf{1}} \Q_p \otimes_{\Z_p} P_{\infty} \to \Q_p \otimes_{\Z_p} L_{\infty} \to \Q_p \otimes_{\Z_p} I_{\infty}[1].\] 
By proposition $\ref{normic}$, we can rewrite these triangles as 
\begin{equation} \label{syntomic}
\Q_p \otimes_{\Z_p} N_{\infty} \to W_{\infty} \otimes_{\Z_p}^{L} (\Q_p \otimes_{\Z_p} I_0) \to W_{\infty} \otimes_{\Z_p}^{L} (\Q_p \otimes_{\Z_p} P_0) \to \Q_p \otimes_{\Z_p} N_{\infty}[1] ,
\end{equation}
and 
\[  W_{\infty} \otimes_{\Z_p}^{L} (\Q_p \otimes_{\Z_p} I_0) \xrightarrow{\mathbf{1}} W_{\infty} \otimes_{\Z_p}^{L} (\Q_p \otimes_{\Z_p} P_0) \to W_{\infty} \otimes_{\Z_p}^{L} (\Q_p \otimes_{\Z_p} L_0) \to  W_{\infty} \otimes_{\Z_p}^{L} (\Q_p \otimes_{\Z_p} I_0)[1].\]

Because $L_{0}$ is $\Z_p$-torsion, we have $\Q_p \otimes_{\Z_p} L_{0}=0$. Consequently, $\mathbf{1}$ gives an isomorphism 
\[ W_{\infty} \otimes_{\Z_p}^{L} (\Q_p \otimes_{\Z_p} I_0) \cong W_{\infty} \otimes_{\Z_p}^{L} (\Q_p \otimes_{\Z_p} P_0). \]

Let us denote by $\iota$ the inverse of the the isomorphism $\mathbf{1}.$ Because $\Q_p$ and $W_{\infty}$ are flat over $\Z_p$, the long exact sequence associated with the triangle $\ref{syntomic}$ can be written as
\[ \ldots \to \Q_p \otimes_{\Z_p} H^{i}(N_{\infty}) \to W_{\infty} \otimes_{\Z_p}^{L} (\Q_p \otimes_{\Z_p} H^{i}(P_{\infty})) \xrightarrow{1-\phi \iota} W_{\infty} \otimes_{\Z_p}^{L} (\Q_p \otimes_{\Z_p} H^{i}(P_{\infty})) \to \ldots \]

By lemma $5.3$ of $\cite{[TV]}$, the map $1-\phi \iota$ is injective. Consequently, the above sequence gives rise to the following short exact sequence 
\begin{equation} \label{short_exact_sequence}
0 \to \Q_p \otimes_{\Z_p} H^{i+1} (P_{\infty}) \xrightarrow{1-\phi \iota} \Q_p \otimes_{\Z_p} H^{i+1} (P_{\infty}) \to \Q_p \otimes_{\Z_p} H^{i}(N_{\infty}) \to 0.
\end{equation} 

Let us denote by $Q(\Gamma)$ the total quotient field of $\Lambda(G)$. By applying $Q(\Gamma) \otimes_{\Lambda(\Gamma)} (-)$ to the short exact sequence $\ref{short_exact_sequence}$, we have 
\[ 0  \to Q(\Gamma) \otimes_{\Lambda(\Gamma)} H^{i}(P_{\infty}) \xrightarrow{1-\phi \iota} Q(\Gamma) \otimes_{\Lambda(\Gamma)} H^{i}(P_{\infty}) \to Q(\Gamma) \otimes_{\Lambda(\Gamma)} H^{i+1}(N_{\infty}) \to 0 .\] 
Because $H^{i}(P_{\infty})$ is a finitely generated $\Lambda(\Gamma)$-module, $Q(\Gamma) \otimes_{\Lambda(\Gamma)} H^{i}(P_{\infty})$ is a finite dimensional vector space over $Q(\Gamma)$. Consequently, the map 
\[ Q(\Gamma) \otimes_{\Lambda(\Gamma)} H^{i+1}(P_{\infty}) \xrightarrow{1-\phi \iota} Q(\Gamma) \otimes_{\Lambda(\Gamma)} H^{i+1}(P_{\infty}) \]
must be an isomorphism. We conclude that $Q(\Gamma) \otimes_{\Lambda(\Gamma)} H^{i+1}(N_{\infty})=0$ for all $i$, and hence $N_{\infty}$ is torsion over $\Lambda(\Gamma)$.

\end{proof}
\section{Tamagawa number conjecture for a $p$-adic family of $F$-crystals}
In this section, we provide a generalization of proposition $\ref{first_theorem}$. We will follow the notation in  $\cite{[CFKSV]}$. 

Let $K$ be the function field of $X$. Let $k$ be the total constant field of $K$ and let $k_{\infty}$ be the unique $\Z_p$-extension of $k$ introduced in section $2.4$.  Let $K_{\infty}$ be a Galois extension of $K$ satisfying the following conditions (i)--(iii). 
\begin{enumerate}[(i)]
\item  The extension $K_{\infty}/K$ is unramified at every point of $X$. 
\item  $K_{\infty}\supset K k_{\infty}$.
\item  Let $G:=\Gal(K_{\infty}/K)$. Then $G$  is a $p$-adic Lie group having no element of order $p$.
\end{enumerate}

Let $H:=\Gal(K_{\infty}/Kk_{\infty})$, $\Gamma:= G/H=\Gal(Kk_{\infty}/K)$. Let $\La(G)=\Z_p[[G]]$ be the completed group ring of $G$ and define $\La(H)$ and $\La(\Gamma)$ similarly.

 Take finite Galois extensions $K_n$ of $K$ in $K_{\infty}$ ($n\geq 1$) such that $K_n\subset K_{n+1}$ and $K_{\infty} =\cup_n\; K_n$. Let $X_n$ be the integral closure of $X$ in $K_n$, so $X_n$ is a finite \'etale Galois covering of $X$. This $X_n$ is a natural generalization of $X_n$ of \S2.4.

Let $S$ be the set of all elements $f$ of $\Lambda(G)$ such that $\Lambda(G)/\Lambda(G)f$ are finitely generated $\Lambda(H)$-modules. Let $S^*= \cup_{n\geq 0}\; p^nS$. 
Then by \cite{[CFKSV]} Theorem 2.4, $S^*$ is a multiplicatively closed left and right Ore set in $\Lambda(G)$ and all elements of $S^*$ are non-zero-divisors of $\Lambda(G)$. Hence we have the ring of fractions $\Lambda(G)_{S^*}$ by inverting all elements of $S^*$ and the canonical homomorphism $\Lambda(G)\to \Lambda(G)_{S^*}$ is injective. 

As in $\cite{[CFKSV]}$, let $\frak M_H(G)$ be  the category of finitely generated $S^*$-torsion $\La(G)$-modules. If $M$ is a finitely generated $\Lambda(G)$-module and $M(p)$ denotes the $\Lambda(G)$-submodule of $M$ consisting of all elements killed by some powers of $p$, $M$ belongs to $\frak M_H(G)$ if and only if $M/M(p)$ is finitely generated as a $\Lambda(H)$-module.

We have a long exact sequence of $K$-groups (see $\cite{[CFKSV]}$, (24))
$$\dots \to K_1(\La(G)) \to K_1(\La(G)_{S^*}) \overset{\partial_G}\to K_0(\frak M_H(G)) \to K_0(\La(G))\to \dots.$$

We have the following lemma.
\begin{lem}\label{MHGlem}
Let $D=(D, \Phi)$ be a uniform $F$-crystal on $X$. We have 
\begin{enumerate}[(i)]

\item $R\varprojlim_n R\Gamma(X_n, N_0DR(D))$ and $R\varprojlim_n R\Gamma(X_n, N_{-\infty}DR(D))$ are bounded complexes and their cohomology groups are finitely generated $\La(G)$-modules. 

\item $R\varprojlim_n R\Gamma(X_n, \Tan(D))$ is a bounded complex and its cohomology groups belong to $\frak M_H(G)$.

\item $R\varprojlim_n R\Gamma(X_n, \Syn(D))$ is a bounded complex. 

\item Assume that there is a finite extension $K'$ of $K$ in $K_{\infty}$ such that $\Gal(K_{\infty}/K')$ is pro-$p$ and such that if $X'$ denotes the integral closure of $X$ in $K'$ and $\Gamma'$ denotes $\Gal(K'k_{\infty}/K')$, then the $\mu$-invariants of the cohomology groups $R\varprojlim_n R\Gamma(X' \otimes_k k_n, \Syn(D))$, which are finitely generated  torsion $\Lambda(\Gamma')$-modules by Proposition 13,  is zero. Then the cohomology groups of 
$R\varprojlim_n R\Gamma(X_n, \Syn(D))$ belong to $\frak M_H(G)$. 

\end{enumerate} 
\end{lem}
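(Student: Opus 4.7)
The plan is to address the four parts in order, exploiting the distinguished triangles
\[
\Syn(D) \to N_0 \DR(D) \to N_{(-\infty)} \DR(D), \qquad N_0 \DR(D) \to N_{(-\infty)} \DR(D) \to \Tan(D),
\]
together with the fact that each $X_n \to X$ is finite \'etale and $X$ is proper of finite dimension over $k$. For (i), both $N_0 \DR(D)$ and $N_{(-\infty)} \DR(D)$ are locally bounded complexes of coherent sheaves of amplitude $\leq \dim X$, so each $R\Gamma(X_n,-)$ is a bounded complex with cohomology finitely generated over $W(k_n)$. The tower $\{R\Gamma(X_n,-)\}_n$ forms a Mittag--Leffler system under the trace maps, hence $R\varprojlim_n$ preserves boundedness; finite generation over $\Lambda(G)$ then follows from topological Nakayama for the compact ring $\Lambda(G)$, reducing the question to finiteness of $H^*(X, N_0 \DR(D))$ over $\Z_p$, which holds by properness.

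For (ii), the key observation is that $\Tan(D)$ is locally killed by a fixed power of $p$: for $r \ge 0$ one has $N^r(D)_Y = D_Y \cap p^r \Phi(F^* D_Y) \subset D_Y \cap \Phi(F^* D_Y) = N_0(D)_Y$, so for $r \gg 0$
\[
p^r \cdot N_{(-\infty)} \DR(D)_Y = N^r \DR(D)_Y \subset N_0 \DR(D)_Y,
\]
which gives $p^r \cdot \Tan(D)_Y = 0$. Combined with (i) and the second triangle, $R\varprojlim_n R\Gamma(X_n, \Tan(D))$ is a bounded complex whose cohomology $M$ is finitely generated over $\Lambda(G)$ and $p$-power torsion. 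Such $M$ satisfies $M = M(p)$, so $M/M(p) = 0$ is trivially finitely generated over $\Lambda(H)$. Part (iii) is then immediate from the first triangle and (i).

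Part (iv) is the main obstacle. I would first replace $K$ by $K'$, observing that $\mathfrak M_H(G)$-membership is preserved under this reduction: $G' = \Gal(K_\infty/K')$ is open in $G$, $H' = H \cap G'$ is open in $H$, and both $\Lambda(G)$ over $\Lambda(G')$ and $\Lambda(H)$ over $\Lambda(H')$ are finitely generated. Hence one may assume $G$ itself is pro-$p$ and the $\mu = 0$ hypothesis applies directly. A Hochschild--Serre argument along the \'etale tower $X_n \to X \otimes_k k_n$ (with Galois group a finite quotient $H_n$ of $H$), combined with the eventual containment $K_n \supset K k_n$ for $n \gg 0$, yields the identification
\[
R\varprojlim_n R\Gamma(X \otimes_k k_n, \Syn(D)) \cong R\Gamma_{\mathrm{cts}}(H, N_\infty),
\]
so the $\mu = 0$ hypothesis says $H^*_{\mathrm{cts}}(H, N_\infty)$ is finitely generated over $\Z_p$. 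Since $H$ is a compact pro-$p$ $p$-adic Lie group without $p$-torsion, it is a Poincar\'e duality group, and a standard Iwasawa-theoretic Nakayama argument converts finiteness of $H$-cohomology over $\Z_p$ into finite generation of $H^*(N_\infty)/H^*(N_\infty)(p)$ over $\Lambda(H)$. The hardest step is executing this final Nakayama argument: one must pass through a spectral sequence relating $H$-homology and $H$-cohomology via Poincar\'e duality (with the correct dualizing character) while carefully tracking $p$-power torsion throughout, and verify that the Hochschild--Serre identification survives the passage to the inverse limit.
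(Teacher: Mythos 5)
Parts (i)--(iii) of your proposal are essentially sound and close to the paper's argument. The reduction in (i) to finiteness of $H^*(X,N_0\DR(D))$ over $\Z_p$ is exactly the paper's strategy, except that the paper makes two points you gloss over: the base-change identification $\Z_p\otimes^L_{\La(G')}R\varprojlim_n R\Gamma(X_n,-)\cong R\Gamma(X',-)$ is a cited theorem (Theorem 2.11 of [TV]), not just Mittag--Leffler, and Nakayama is applied over $\La(G')$ for $G'$ pro-$p$ (so that the ring is local), not over $\La(G)$ itself. Your observation in (ii) that $N^r\DR(D)_Y\subset N_0\DR(D)_Y$ for $r\gg 0$, so that $\Tan(D)$ is killed by $p^r$ and its cohomology satisfies $M=M(p)$, is correct and in fact supplies a detail the paper leaves implicit.

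The gap is in (iv). You identify $R\varprojlim_n R\Gamma(X'\otimes_k k_n,\Syn(D))$ with the derived $H$-\emph{invariants} $R\Gamma_{\mathrm{cts}}(H,N_\infty)$, and this choice of variance forces you into Poincar\'e duality for $H$ and a homology--cohomology spectral sequence that you explicitly do not execute (``the hardest step''). This is both the wrong identification and an unnecessary detour. The correct descent statement (again Theorem 2.11 of [TV], the same tool as in (i)) is in terms of derived \emph{coinvariants}:
\[
\Z_p\otimes^L_{\La(H')}\,R\varprojlim_n R\Gamma(X_n,\Syn(D))\;\cong\;R\varprojlim_n R\Gamma(X'\otimes_k k_n,\Syn(D)),
\]
with $H'=\Gal(K_\infty/K'k_\infty)$ pro-$p$. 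The $\mu=0$ hypothesis says the right-hand side has cohomology finitely generated over $\Z_p$, so ordinary Nakayama over the local ring $\La(H')$ immediately gives that the cohomology of $R\varprojlim_n R\Gamma(X_n,\Syn(D))$ is finitely generated over $\La(H')$, hence over $\La(H)$, which is the criterion for membership in $\frak M_H(G)$. No duality, dualizing character, or torsion bookkeeping is needed. As written, your (iv) has an unproven central step; replacing invariants by coinvariants removes it entirely.
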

\begin{proof} Let $K'$ be as in the assumption of (iv). Let $G'=\Gal(K_{\infty}/K')$, $H'=\Gal(K_{\infty}/K'k_{\infty})$. These are pro-$p$ groups. Let $X'$ be the integral closures of $X$ in $K'$.

 By Theorem 2.11 of $\cite{[TV]}$, we have \[ \Z_p\otimes_{\La(G')}^L  R\varprojlim_nR\Gamma(X_n, N_0DR(D)) = R\Gamma(X', N_0DR(D)).\]
 The right hand side is a finitely generated $\Z_p$-module. Hence (i) follows from  theorem 2.11 of $\cite{[TV]}$ and Nakayama's lemma applied to the local ring $\La(G')$ (this is a local ring because  $G'$ is a pro-$p$ group).
 The proof for $N_{-\infty}DR(D)$ is similar. 
 (ii) and (iii) follow from (i) by the distinguished triangles 

\begin{enumerate}[(1)]
\item  $R\varprojlim_n R\Gamma(X_n, \Syn(D)) \to R\varprojlim_n R\Gamma(X_n, N_0DR(D)) \to R\varprojlim_n R\Gamma(X_n, N_{-\infty}DR(D)) \to$,

\item $R\varprojlim_n R\Gamma(X_n, N_0DR(D)) \to R\varprojlim_n R\Gamma(X_n, N_{-\infty}DR(D))\to R\varprojlim_n R\Gamma(X_n, \Tan(D)) \to$

\end{enumerate}

By the assumption of (iv),  cohomology groups of $R\varprojlim_n R\Gamma(X' \otimes_k k_n, \Syn(D))$ are finitely generated $\Z_p$-modules. By theorem $2.11$ of $\cite{[TV]}$, we have also
\[\Z_p \otimes_{\La(H')} R\varprojlim_n R\Gamma(X_n, \Syn(D)) = R\varprojlim_n R\Gamma(X' \otimes_k k_n, \Syn(D)). \] 

By Nakayama's lemma applied to the local ring $\La(H')$, we have that all cohomology groups of $R\varprojlim_n R\Gamma(X_n, \Syn(D))$ are finitely generated $\La(H')$-modules and hence finitely generated $\La(H)$-modules. This proves (iv).  
\end{proof}

In the rest of this \S2.5, we assume that the assumption of \ref{MHGlem} (iv) is satisfied. 

We define the $p$-adic $L$-function $\mathcal{L}_D\in K_1(\La(G)_{S^*})$ as the minus of the class of the automorphism 
$1-\phi\iota$ of $\La(G)_{S^*}\otimes_{\La(G)}  R\varprojlim_n R\Gamma(X_n, N_{-\infty}DR(D))$.  

The special values of $\mathcal{L}_D$ in the sense of $\cite{[CFKSV]}$ \S3 are $L$-values of $D$:
$$\rho(\mathcal{L}_D)=   L(X, D, \rho^{\vee}, 0)$$
for every finite dimensional continuous $p$-adic representation $\rho$ of $G$ which factors through a finite quotient of $G$. This is reduced to proposition 5.4 of $\cite{Fcrystals}$ by the proof of $\cite{[TV]}$ Theorem 1.1 (ii).

By the above distinguished triangles (1) and (2), we have (a version of Iwasawa main conjecture)
\begin{prop}
$$\partial_G(\mathcal{L}_D)= [R\varprojlim_n R\Gamma(X_n, \Syn(D))] +[R\varprojlim_n R\Gamma(X_n, \Tan(D)))]$$
in $K_0(\frak M_H(G))$. 
\end{prop}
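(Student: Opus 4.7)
The plan is to unwind the definition of $\mathcal{L}_D$ via the two distinguished triangles (1) and (2) of Lemma~\ref{MHGlem}, and then apply the standard compatibility of the boundary map $\partial_G$ with cones of quasi-isomorphisms of perfect complexes. Set $I_\infty := R\varprojlim_n R\Gamma(X_n, N_0DR(D))$ and $P_\infty := R\varprojlim_n R\Gamma(X_n, N_{-\infty}DR(D))$. By Lemma~\ref{MHGlem}(i) these are bounded complexes with finitely generated cohomology, and since $G$ is a $p$-adic Lie group with no element of order $p$, $\La(G)$ has finite global dimension, so both admit perfect representatives over $\La(G)$. By parts (ii)--(iv) of the same lemma, $N_\infty := R\varprojlim_n R\Gamma(X_n,\Syn(D))$ and $L_\infty := R\varprojlim_n R\Gamma(X_n,\Tan(D))$ both lie in $\frak M_H(G)$, which is equivalent to saying that each becomes acyclic after base-change via $\La(G)_{S^*} \otimes_{\La(G)} (-)$.

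As a consequence, the two maps appearing in the triangles,
\[ \mathbf{1}: I_\infty \to P_\infty \quad \text{and} \quad 1-\phi: I_\infty \to P_\infty, \]
both become quasi-isomorphisms after localization and therefore define classes $[\mathbf{1}], [1-\phi] \in K_1(\La(G)_{S^*})$. Letting $\iota$ denote the inverse of $\mathbf{1}$ after localization, a direct calculation gives
\[ 1 - \phi\iota \;=\; (1-\phi)\circ\iota \]
as an automorphism of $\La(G)_{S^*}\otimes_{\La(G)} P_\infty$. Hence in $K_1(\La(G)_{S^*})$ we have
\[ [1-\phi\iota] \;=\; [1-\phi] + [\iota] \;=\; [1-\phi] - [\mathbf{1}], \]
so, by the definition of $\mathcal{L}_D$,
\[ \mathcal{L}_D \;=\; -[1-\phi\iota] \;=\; [\mathbf{1}] - [1-\phi]. \]

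To conclude, I would invoke the standard formula $\partial_G[f] = [\mathrm{cone}(f)]$ for a morphism $f$ of perfect $\La(G)$-complexes whose localization at $S^*$ is a quasi-isomorphism. Triangle (2) identifies $\mathrm{cone}(\mathbf{1}) \simeq L_\infty$, hence $\partial_G[\mathbf{1}] = [L_\infty]$; rotating triangle (1) to $I_\infty \xrightarrow{1-\phi} P_\infty \to N_\infty[1] \to I_\infty[1]$ identifies $\mathrm{cone}(1-\phi) \simeq N_\infty[1]$, hence $\partial_G[1-\phi] = [N_\infty[1]] = -[N_\infty]$. Combining,
\[ \partial_G(\mathcal{L}_D) \;=\; \partial_G[\mathbf{1}] - \partial_G[1-\phi] \;=\; [L_\infty] + [N_\infty], \]
which is the desired identity. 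The main technical obstacle I anticipate is verifying that $I_\infty$ and $P_\infty$ are genuinely perfect $\La(G)$-complexes (rather than merely bounded complexes with finitely generated cohomology), since this is what licenses the cone formula for $\partial_G$; once that is in place, the remainder is a formal manipulation of classes in the localization sequence of $K$-theory, together with the sign convention $[N_\infty[1]] = -[N_\infty]$.
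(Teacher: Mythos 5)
Your argument is correct and follows the same route as the paper, which only cites the two distinguished triangles (1) and (2) without spelling out the computation; you have filled in the factorization $1-\phi\iota = (1-\phi)\circ\iota$, the cone formula for $\partial_G$, and the sign bookkeeping explicitly. The one point worth tightening is the assignment of standalone classes $[\mathbf{1}], [1-\phi]\in K_1(\La(G)_{S^*})$ to quasi-isomorphisms between \emph{different} complexes (rather than automorphisms); this is a standard abuse that can be made rigorous via the relative $K$-theory of the pair $(\La(G),S^*)$, and since everything is ultimately re-assembled into the genuine automorphism $1-\phi\iota$ of $\La(G)_{S^*}\otimes P_\infty$ before applying $\partial_G$, the conclusion is unaffected.
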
 

\section*{Acknowledgements}

I am grateful to my advisor, Professor Kazuya Kato, for introducing me to the Tamagawa number conjecture, sharing his insights, and his constant support during the elaboration of this work. As it is visible to the readers, our article is strongly influenced by the work $\cite{[TV]}$ of D. Trihan and F. Vauclair. I am thankful to these authors for their innovative ideas. 

% Figures and tables, if you decide to leave them to the end
%\input{figure}
%\input{table}


\begin{thebibliography}{99}

\bibitem{[Beilinson]} 
{ A. Beilinson, Higher regulators and values of L-functions, Journal of Soviet Mathematics
30 (1985), 2036-2070.}

\bibitem{[BK]} 
   {S. Bloch and K. Kato, L-functions and Tamagwa numbers of motives, The Grothendieck
    Festschrift I, Progress in Mathematics, vol. 86, Birkhauser, Boston, Basel, Berlin, 1990,
    pp. 333-400.}
 
\bibitem{[BT2]}
{O. Brinon, F. Trihan, Tamagawa number formula with coefficients over varieties in positive characteristic, preprint}


\bibitem{[CFKSV]}
 {J. Coates, T. Fukaya, K. Kato, R. Sujatha, O. Venjakob, The GL$_2$ main conjecture for elliptic curves without complex multiplication, Publ. Math. IHES 101 (2005), 163--208}

\bibitem{Fcrystals} 
{ K. Kato, Syntomic complexes of $F$-crystals and Tamagawa number conjecture in characteristics $p$, by Kazuya Kato, with Appendix by Arthur Ogus, preprint}   
\bibitem{[Mazur1]}
{B. Mazur, Frobenius and the Hodge filtration, Bull A.M.S. 78 (1972), 653-667.}
\bibitem{[Mazur2]}
{B. Mazur, Frobenius and the Hodge filtration-estimates, Ann. of Math 98 (1973), 58-95}

\bibitem{[Ogus1]} 
{A. Ogus, F-crystals, Griffiths tranversality, Proc. Intl. Symp. on Algebraic geometry, Kyoto, 1977, Kinokuniya (1978), 15-44}

\bibitem{[Ogus2]}
{ A. Ogus, F-crystals, Griffiths tranversality, and the Hodge decomposition, Asterisque 221 (1994) }

\bibitem{[TV]}
{F. Trihan, D. Vauclair, On the non commutative Iwasawa main conjecture for abelian varieties over function fields, Doc. Math. 24 (2019), 473–522}

   
\end{thebibliography}
\end{document}